\providecommand{\U}[1]{\protect\rule{.1in}{.1in}}
\newtheorem{theorem}{Theorem}[section]
\newtheorem{lemma}[theorem]{Lemma}
\newtheorem{proposition}[theorem]{Proposition}
\theoremstyle{definition}
\theoremstyle{remark}
\numberwithin{equation}{section}
\begin{document}
\title[The tensor product of function algebras]{The tensor product of function algebras}
\author[Azouzi and al.]{Y.Azouzi,$^{1}$, M. A. Ben Amor$^{2}$ and J. Jaber$^{3}$$^{\ast}$}
\address{$^{1}$ {\small Research Laboratory of Algebra, Topology, Arithmetic, and
Order}\\
{\small Department of Mathematics}\\
{\small Faculty of Mathematical, Physical and Natural Sciences of Tunis}\\
{\small Tunis-El Manar University, 2092-El Manar, Tunisia}\\
$^{2}${\small Research Laboratory of Algebra, Topology, Arithmetic, and Order}\\
{\small Department of Mathematics}\\
{\small Faculty of Mathematical, Physical and Natural Sciences of Tunis}\\
{\small Tunis-El Manar University, 2092-El Manar, Tunisia}}
\email{jamel.jaber@free.fr}
\subjclass[2010]{Primary 39B82; Secondary 44B20, 46C05.}
\keywords{Tensor product, function algebra, Riesz subalgebra.}
\date{Received: xxxxxx; Revised: yyyyyy; Accepted: zzzzzz. }
\date{\indent$^{\ast}$ Corresponding author}
\date{Received: xxxxxx; Revised: yyyyyy; Accepted: zzzzzz. }
\date{\indent$^{\ast}$ Corresponding author}

\begin{abstract}
In this paper we study the tensor product of two $f$-algebras. We show that
the Riesz Subspace generated by a subalgebra in an $f$-algebra is an algebra 
in order to prove that the Riesz tensor product of two $f$-algebras has a
structure of an $f$-algebra. 

\end{abstract}
\maketitle

\setcounter{page}{1}


\section{Introduction.}

\bigskip The algebraic tensor product of two vector spaces $E,$ $F$ is a
vector space $E\otimes F$ with a bilinear map $\otimes:E\times
F\longrightarrow E\otimes F$ satisfying the following universal property :

for every vector space $G$ and a bilnear map $T:E\times F\longrightarrow G$
there exists a unique linear map $T^{\otimes}:E\otimes F\longrightarrow G$
such that $T=T^{\otimes}\circ\otimes.$

The Riesz tensor product of two Riesz spaces $E,$ $F$ is a Riesz space
$E\overline{\otimes}F$ with a Riesz bimorphism map $\otimes:E\times
F\longrightarrow E\overline{\otimes}F$ satisfying a similar universal property
(where maps are assumed to respect the Riesz structure). The construction of
this Tensor product is due to Fremlin in his fundamental paper \cite{F1972}.
This subject was studied several times later namely by Schaffer in \cite{SS},
Grobbler and Labaushagne \cite{GL19882, GL1988}.

It appears natural to ask about the tensor product of another class of
ordered structure: $f$-algebras! That is, if $E$ and $F$ are $f$-algebras,
does an $f$-algebra $E\overline{\otimes_{f}}F$ with a algebra and Riesz
bimorphism map $\otimes:E\times F\longrightarrow E\overline{\otimes_{f}}F$
satisfying a similar universal property (where maps are assumed to respect the
Riesz and the algebra structures) exist? 

The latter question motivated our study, therfore it is natural to focus on the
most important case of $f$-algebras: the $C(X)$'s case. The Riesz tensor
product $C(X)\overline{\otimes}C(Y)$ of $C(X)$ and $C(Y)$, where $X$ and $Y$
are topological spaces, is the Riesz sub-space of $C(X\times Y)$ generated by
the algebraic tensor product $C(X)\otimes C(Y).$ The legitimate condidate to
consider as an $f$-algebra tensor product of $C(X)$ and $C(Y)$ is the $f$-algebra
generated by $C(X)\overline{\otimes}C(Y)$ in the $f$-algebra $C(X\times Y).$
Another question araise: is $C(X)\overline{\otimes}C(Y)$ itself an
$f$-algebra? This leads to a more general question : let $B$ be a subalgebra
of the $f$-algebra $A.$ Is the Riesz subspace generated by $B$ in $A$ a subalgebra?

Section 2 gives, among other results, an affirmative question to the last
question, where we use which seems to be a new construction a Riesz space
generated by a subset.

In the next section, we present some results on bilinear maps that will be
usefull in the last section.

Section 4  is devoted to the study of the $f$-algebra tensor product. We will
prove that the Riesz tensor product of two semiprime $f$-algebras
$A$ and $B$ has automatically a structure of $f$-algebra, and satisfies the
approprate universal property that is:
\[
A\overline{\otimes}B=A\overline{\otimes_{f}}B.
\]

\section{Preliminaries}

We assume the reader to be familiar with the terminology of Riesz spaces and
$f$-algebras and we refer to \cite{Aliprantis} and \cite{SS}. Recall that a
linear mapping $T:E\longrightarrow F$ between two Riesz spaces $E$ and $F$ is
said to be lattice homomorphism if
\[
T(\left\vert x\right\vert )=\left\vert T(x)\right\vert \text{ for all }x\in
E\text{.}%
\]
A bilinear map $\varphi:E\times F\longrightarrow G$ is called positive if
$\varphi(x,y)\geq0$ for all $(x,y)\in E^{+}\times F^{+}$ and it is called a
Riesz bimorphism whenever $\varphi(.,y)$ and $\varphi(x,.)$ are Riesz
homomorphism for all $(x,y)\in E^{+}\times F^{+}$.

The (relatively) uniform topology on Riesz spaces plays a key role in the
context of this work. Let us therefore recall the definition. Let $E$ be a
Riesz space. A sequence $(f_{n})_{n\in\mathbb{N}}$ of elements of $E$ is said
to converge relatively uniformly to $f\in E$ if there exists $v\in E$ such
that for all $\varepsilon>0$ there exists $N\in\mathbb{N}$ such that
$\left\vert f_{n}-f\right\vert \leq\varepsilon.v$ for all $n\geq N$. Uniform
limits are unique if and only if $E$ is Archimedean. \textit{For this reason
all vector lattices and lattice-ordered algebras under consideration are
assumed to be Archimedean}.

The following lemma turns out to be useful for later purposes.

\begin{lemma}
\label{Cont}Let $E,F$ and $G$ be Archimedeans Riesz spaces and $\varphi
:A\times B\longrightarrow C$ be a positive bilinear map. Let $(f_{n}%
)_{n\in\mathbb{N}}\in A^{\mathbb{N}}$ and $(g_{n})_{n\in\mathbb{N}}\in
B^{\mathbb{N}}$ be two sequences which converges relatively uniformly to $f$
and $g$ respectively. Then $(\varphi(f_{n},g_{n}))_{n\in\mathbb{N}}$ converges
relatively uniformly to $\varphi(f,g)$.
\end{lemma}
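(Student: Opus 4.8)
The plan is to reduce the two-variable statement to two successive one-variable uniform-continuity arguments. Fix witnesses of the uniform convergence: let $u\in A^{+}$ be such that for every $\varepsilon>0$ there is $N$ with $|f_{n}-f|\le\varepsilon u$ for $n\ge N$, and similarly let $v\in B^{+}$ witness $g_{n}\to g$ relatively uniformly. The natural candidate for a witness of $\varphi(f_{n},g_{n})\to\varphi(f,g)$ is some element of $C^{+}$ built out of $\varphi(u,v)$, $\varphi(u,|g|)$, $\varphi(|f|,v)$ (and possibly $\varphi(u,v)$ again); I expect $w:=\varphi(u,v)+\varphi(u,|g|)+\varphi(|f|,v)$ to work.

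First I would write the basic telescoping identity
\[
\varphi(f_{n},g_{n})-\varphi(f,g)=\varphi(f_{n}-f,g_{n})+\varphi(f,g_{n}-g),
\]
which holds by bilinearity. Then I would estimate each term separately. For the second term, positivity of $\varphi$ together with $\varphi(f,\cdot)=\varphi(f^{+},\cdot)-\varphi(f^{-},\cdot)$ gives, for any $h\in B$, the bound $|\varphi(f,h)|\le\varphi(|f|,|h|)$ (this is the standard ``modulus'' inequality for positive bilinear maps; if it is not quoted earlier I would include a one-line verification via $\pm\varphi(f,h)=\varphi(f^{+},h^{+})+\varphi(f^{-},h^{-})-\varphi(f^{+},h^{-})-\varphi(f^{-},h^{+})\le\varphi(|f|,|h|)$ after checking each summand is dominated). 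Applying this with $h=g_{n}-g$ and using $|g_{n}-g|\le\varepsilon v$ yields $|\varphi(f,g_{n}-g)|\le\varepsilon\,\varphi(|f|,v)$ for $n$ large. For the first term I would first control $|g_{n}|$: from $|g_{n}-g|\le\varepsilon v$ (say for $\varepsilon\le 1$ and $n\ge N_{1}$) we get $|g_{n}|\le|g|+v$, hence $|\varphi(f_{n}-f,g_{n})|\le\varphi(|f_{n}-f|,|g_{n}|)\le\varepsilon\,\varphi(u,|g|+v)=\varepsilon(\varphi(u,|g|)+\varphi(u,v))$ for $n$ large.

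Combining, for every $\varepsilon\in(0,1]$ there is $N$ so that $|\varphi(f_{n},g_{n})-\varphi(f,g)|\le\varepsilon w$ for all $n\ge N$, with $w$ as above; for $\varepsilon>1$ the case $\varepsilon=1$ already suffices. This is exactly relative uniform convergence to $\varphi(f,g)$. The only genuinely delicate point is the uniform domination $|g_{n}|\le|g|+v$ of the whole sequence by a single element — it is what lets us pull a fixed $\varphi(u,\cdot)$-term out of the first estimate — but it is immediate from the definition once we fix $\varepsilon=1$ in the hypothesis on $(g_{n})$. Everything else is bilinearity, positivity, and the modulus inequality, so I do not anticipate any real obstacle; the proof is short.
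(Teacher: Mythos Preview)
Your argument is correct and is essentially the same as the paper's: both use the telescoping $\varphi(f_{n},g_{n})-\varphi(f,g)=\varphi(f_{n}-f,g_{n})+\varphi(f,g_{n}-g)$, the modulus inequality $|\varphi(x,y)|\le\varphi(|x|,|y|)$, and a uniform bound on $|g_{n}|$ to produce a single regulator in $C$. The only cosmetic difference is that the paper asserts boundedness of $(g_{n})$ by a single $w$ for all $n$, while you make the explicit choice $|g_{n}|\le|g|+v$ for $n$ large, which is all that is needed.
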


\begin{proof}
There exists $u$ $\in A$ and $v\in B$ such that for all $\varepsilon>0$ there
exist $N\in\mathbb{N}$ such that $\left\vert f_{n}-f\right\vert \leq
\varepsilon.u$ and $\left\vert g_{n}-g\right\vert \leq\varepsilon.v$.
\ Observe that the sequence $(g_{n})_{n\in\mathbb{N}\text{ }}$is bounded, that
is there exist $w\in B$ such that $\left\vert g_{n}\right\vert \leq w$ for all
$n\in\mathbb{N}$. It follows from the bilinearity and the positivity of
$\varphi$ that for all $n\geq N$,
\begin{align*}
\left\vert \varphi(f_{n},g_{n})-\varphi(f,g)\right\vert  & \leq\varphi
(\left\vert f_{n}-f\right\vert ,\left\vert g_{n}\right\vert )+\varphi
(\left\vert f\right\vert ,\left\vert g_{n}-g\right\vert )\\
& \leq\varepsilon.(\varphi(u,w)+\varphi(\left\vert f\right\vert ,v))\text{.}%
\end{align*}
This implies the desired result.
\end{proof}

\bigskip The next paragraph deals with the notion of function-algebras (or $f
$-algebras). A Riesz space $A$ is called a lattice ordered algebra if there
exists an associative multiplication in $A$ with the usual algebraic
properties such that $fg\geq0$ for all $f,g\in A^{+}$. The lattice-ordered
algebra $A$ is said to be an $f$-algebra if $f\wedge g=0$ and $0\leq h\in A$
imply $fh\wedge g=hf\wedge g=0$. The most classical example of an $f$-algebra
is the algebra $C(X)$ of all real-valued continuous functions on a topological
space $X$. \ Recall from \cite[Theorem 142.7]{Z1983} that if $A$ is an
$f$-algebra with unit element $e$ then for all $0\leq x\in A$, the sequence
$(x\wedge ne)_{n\in\mathbb{N}}$ converge uniformly to $x$. The reader can
consult \cite{Z1983} for more information about $f$-algebras.

Finally we recall some facts about the universally completion of a Riesz
space. A Dedekind complete vector lattice is called universally complete
whenever every set of pairwise disjoint positive elements has a supremum.
Every vector lattice $E$ has a universal completion $E^{u}$, meaning that
there exists a unique universally complete vector lattice $E^{u}$ such that
$E$ can be identified with an order dense vector sublattice of $E^{u}$.
Moreover, if $e$ is a weak order unit in $E$ then $E^{u}$ can be endowed with
a multiplication, in such a manner that $E^{u}$ becomes an $f$-algebra with
$e$ as identity.

\section{Riesz subspace generated by a subalgebra.}

\bigskip Let $D$ be a subset of a Riesz space $E$. The Riesz subspace
generated by $D$, that is the smallest Riesz subspace containing $D$, will be
denoted by $\mathcal{R}(D)$. The first theorem of this section furnishes a
useful new construction, as the best of our knowledge, of the Riesz subspace
generated by a vector subspace. Consider a vector subspace $F$ of $E $. Define
a sequence $(L_{n})_{n\in\mathbb{N}}$ of vector subspaces by:

$L_{1}=F$ and $L_{n+1}$ is the vector subspace generated by $L_{n}$ and
$|L_{n}|$ for $n=1,2,\ \ldots$ where
\[
|D|=\{|x|:x\in D\}
\]
for any subset $D$ of $E$.

\begin{theorem}
\label{C}Let $F$ be a vector subspace of a Riesz space $E$, and $\left(
L_{n}\right)  _{n\in\mathbb{N}}$ defined as above. Then $\mathcal{R}(F)=%
{\textstyle\bigcup\limits_{n=1}^{\infty}}
L_{n}$.
\end{theorem}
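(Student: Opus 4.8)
Write $M:=\bigcup_{n=1}^{\infty}L_{n}$; the goal is to show $M=\mathcal{R}(F)$. I would first record the routine observation that the chain is increasing: since $L_{n+1}$ is, by construction, the vector subspace generated by $L_{n}\cup|L_{n}|\supseteq L_{n}$, we have $L_{1}\subseteq L_{2}\subseteq\cdots$. For the inclusion $M\subseteq\mathcal{R}(F)$ I would prove $L_{n}\subseteq\mathcal{R}(F)$ for all $n$ by induction. The case $n=1$ is the hypothesis $L_{1}=F\subseteq\mathcal{R}(F)$. Assuming $L_{n}\subseteq\mathcal{R}(F)$, closure of the Riesz subspace $\mathcal{R}(F)$ under the absolute value gives $|L_{n}|\subseteq\mathcal{R}(F)$; as $\mathcal{R}(F)$ is in particular a vector subspace, it contains the vector subspace generated by $L_{n}\cup|L_{n}|$, i.e. $L_{n+1}\subseteq\mathcal{R}(F)$. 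Taking the union over $n$ yields $M\subseteq\mathcal{R}(F)$.

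For the reverse inclusion it suffices, by minimality of $\mathcal{R}(F)$, to check that $M$ is a Riesz subspace of $E$ containing $F$. The containment $F=L_{1}\subseteq M$ is immediate. To see that $M$ is a vector subspace, take $x,y\in M$ and scalars $\alpha,\beta$; using that the chain is increasing there is a $k$ with $x,y\in L_{k}$, whence $\alpha x+\beta y\in L_{k}\subseteq M$. To see that $M$ is closed under the absolute value, take $x\in M$, say $x\in L_{n}$; then $|x|\in|L_{n}|\subseteq L_{n+1}\subseteq M$. Finally, a vector subspace that is closed under $x\mapsto|x|$ is automatically a Riesz subspace, since $x\vee y=\tfrac12\bigl(x+y+|x-y|\bigr)$ and $x\wedge y=\tfrac12\bigl(x+y-|x-y|\bigr)$. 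Hence $M$ is a Riesz subspace containing $F$, so $\mathcal{R}(F)\subseteq M$, and the two inclusions give the claim.

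The argument presents no genuine difficulty; the only points deserving care are to establish the monotonicity of $(L_{n})$ before using it to verify that $M$ is a subspace, and to exploit the identities above so that closure under $|\cdot|$ alone already delivers a sublattice — this is precisely why the construction only needs to adjoin $|L_{n}|$ at each step rather than all finite lattice operations.
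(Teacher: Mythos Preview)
Your proof is correct and follows essentially the same approach as the paper: both establish $L_{n}\subseteq\mathcal{R}(F)$ by induction and show that the union is a Riesz subspace containing $F$ by exploiting the monotonicity of $(L_{n})$ and closure under $|\cdot|$. You simply reverse the order of the two inclusions and make explicit the lattice identities $x\vee y=\tfrac12(x+y+|x-y|)$, $x\wedge y=\tfrac12(x+y-|x-y|)$ that the paper leaves tacit.
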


\begin{proof}
Since $(L_{n})_{n\in\mathbb{N}}$ is an increasing sequence of vector
subspaces, it follows that $L=\bigcup_{n=1}^{\infty}L_{n}$ is a vector
subspace of $E$. Moreover, for all $f\in L_{n}$ we have $\left\vert
f\right\vert \in L_{n+1}\subseteq L.$ This implies that $L$ is a Riesz
subspace which contains $F$. Hence $\mathcal{R}(F)\subseteq L$. Conversely, we
prove easily by induction that $L_{n}\subseteq\mathcal{R}(F)$ for all
$n=1,2,..$which yields to the inclusion $L\subseteq\mathcal{R}(F).$
\end{proof}

\bigskip The following technical lemma will be used to prove the main theorem
of this section.

\begin{lemma}
\label{L1}\textit{Let} $A$ \textit{be a semiprime} $f$-\textit{algebra. Then
the following hold}

\begin{description}
\item[(i)] \textit{For all} $a\in A^{+}$
\[
0\leq a^{2}\leq a+a^{3}.
\]

\item[(ii)] \textit{For all} $a,\ b\in A$,
\[
a^{+}b^{+}=(ab)^{+}\wedge((a+a^{3})^{+}+(b+b^{3})^{+})\ .
\]

\end{description}
\end{lemma}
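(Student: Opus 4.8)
The plan is to prove (i) by reducing to the case of an $f$-algebra with a unit, and then to deduce (ii) from (i) together with two classical identities valid in any Archimedean $f$-algebra; throughout I use that such algebras are commutative and that squares are positive. For (i): a semiprime $f$-algebra can be regarded as an $f$-subalgebra of an $f$-algebra $B$ with unit element $e$ --- for instance its unitization, or its universal completion, the multiplication extending precisely because $A$ is semiprime --- and such an embedding preserves the order and the product, so it is enough to establish $0\le a^{2}\le a+a^{3}$ in $B$. There, positivity of products gives $a^{2}\ge 0$, while from $(e-a)^{2}\ge 0$ and $a\ge 0$ we obtain $0\le a(e-a)^{2}=a-2a^{2}+a^{3}$, i.e.\ $2a^{2}\le a+a^{3}$; combining this with $a^{2}\le 2a^{2}$ gives $0\le a^{2}\le 2a^{2}\le a+a^{3}$.

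For (ii) I would first record two identities. The first is $(xy)^{+}=x^{+}y^{+}+x^{-}y^{-}$, a consequence of the multiplicativity of the modulus $|xy|=|x||y|$: write $(xy)^{+}=\tfrac{1}{2}(xy+|xy|)$, substitute $xy=(x^{+}-x^{-})(y^{+}-y^{-})$ and $|xy|=(x^{+}+x^{-})(y^{+}+y^{-})$, and expand; the cross terms cancel. The second concerns $a+a^{3}$: since disjoint elements of an Archimedean $f$-algebra have zero product we have $a^{+}a^{-}=0$, hence $(a^{+}-a^{-})^{3}=(a^{+})^{3}-(a^{-})^{3}$ and therefore $a+a^{3}=\bigl(a^{+}+(a^{+})^{3}\bigr)-\bigl(a^{-}+(a^{-})^{3}\bigr)$. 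The two bracketed terms are positive and disjoint --- disjointness is preserved by multiplication by positive elements, which is exactly the defining $f$-algebra axiom --- so this is the decomposition into positive and negative parts, giving $(a+a^{3})^{+}=a^{+}+(a^{+})^{3}$ and, likewise, $(b+b^{3})^{+}=b^{+}+(b^{+})^{3}$.

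Writing $S:=(a+a^{3})^{+}+(b+b^{3})^{+}$, the identity to prove becomes $a^{+}b^{+}=\bigl(a^{+}b^{+}+a^{-}b^{-}\bigr)\wedge S$, and I would establish it through two steps. Step (A): $a^{+}b^{+}\le S$. With $x=a^{+}$, $y=b^{+}\ge 0$, part (i) gives $x^{2}\le x+x^{3}$ and $y^{2}\le y+y^{3}$, and $(x-y)^{2}\ge 0$ gives $2xy\le x^{2}+y^{2}$; hence $xy\le x^{2}+y^{2}\le(x+x^{3})+(y+y^{3})=S$. Step (B): $a^{-}b^{-}\perp S$. Indeed $a^{-}$ is disjoint from $a^{+}+(a^{+})^{3}$, so multiplication by $b^{-}\ge 0$ leaves $a^{-}b^{-}$ disjoint from it; symmetrically $a^{-}b^{-}$ is disjoint from $b^{+}+(b^{+})^{3}$; hence $a^{-}b^{-}\perp S$. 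From (A) and (B), $a^{+}b^{+}\wedge a^{-}b^{-}\le S\wedge a^{-}b^{-}=0$, so $a^{+}b^{+}$ and $a^{-}b^{-}$ are disjoint and therefore $a^{+}b^{+}+a^{-}b^{-}=a^{+}b^{+}\vee a^{-}b^{-}$; then, by distributivity of the lattice and since $a^{+}b^{+}\wedge S=a^{+}b^{+}$ and $a^{-}b^{-}\wedge S=0$, we get $(ab)^{+}\wedge S=(a^{+}b^{+}\vee a^{-}b^{-})\wedge S=(a^{+}b^{+}\wedge S)\vee(a^{-}b^{-}\wedge S)=a^{+}b^{+}$, which is (ii).

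The main obstacle, as I see it, is not any single computation but the foundational facts one leans on: the reduction at the start of (i) (that a semiprime $f$-algebra embeds into a unital $f$-algebra compatibly with both the order and the multiplication), the multiplicativity $|xy|=|x||y|$, and the vanishing of products of disjoint elements in an Archimedean $f$-algebra. Each is classical and should be cited precisely from, say, \cite{Z1983} or \cite{Aliprantis}; granting them, the rest is elementary lattice and ring manipulation.
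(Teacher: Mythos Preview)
Your proof is correct and follows essentially the same route as the paper's: the same reduction to the unital case in (i) (the paper completes the square via $a^{2}-a+e=(a-\tfrac{1}{2}e)^{2}+\tfrac{3}{4}e$ rather than your $a(e-a)^{2}\ge 0$, but both are just instances of positivity of squares), and in (ii) the same identification $(a+a^{3})^{+}=a^{+}+(a^{+})^{3}$, the same bound $a^{+}b^{+}\le (a^{+})^{2}+(b^{+})^{2}\le z$, and the same disjointness $a^{-}b^{-}\perp z$ to conclude. Your version is simply more explicit about the lattice identities the paper leaves tacit.
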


\begin{proof}
(i) Assume first that $A$ has a unit element $e$. Then
\begin{align*}
a^{3}+a-a^{2} &  =a(a^{2}-a+e)\\
&  =a((a-2^{-1}e)^{2}+(3/4)e)\geq0
\end{align*}
as squares in $f$-algebra are positive. Since any semiprime $f$-algebra can be
embedded as a Riesz subalgebra in an $f$-algebra with unit, the result follows
for general case.

(ii) Let $a,\ b\in A$ and put $z=(a+a^{3})^{+}+(b+b^{3})^{+}.$ Observe that
\[
z=a^{+}+\left(  a^{+}\right)  ^{3}+b^{+}+\left(  b^{+}\right)  ^{3}.
\]
It follows from $\left(  i\right)  $ that
\begin{equation}
z\geq(a^{+})^{2}+(b^{+})^{2}\geq a^{+}b^{+}%
\end{equation}

Moreover, since $a^{-}b^{-}\wedge z=0$ we get
\begin{equation}
(ab\ )^{+}\wedge z=(a^{+}b^{+}+a^{-}b^{-})\wedge z=a^{+}b^{+}\wedge
z=a^{+}b^{+}%
\end{equation}

\end{proof}

The subalgebra generated by a Riesz subspace of an $f$-algebra $A$ fails in
general to be a Riesz subspace. Take for example $A=C[0,1]$, and $B$ the
vector subspace generated by $u$ where $u(x)=x$ for all $x\in\lbrack0,1].$
Surprising enough, it turns out that the converse situation is true. In fact,
the main result of this section states that the Riesz subspace generated by a
subalgebra of an $f$-algebra $A$ is indeed a subalgebra.

\begin{theorem}
\label{M1}\textit{Let} $A$ \textit{be a semiprime} $f$\textit{-gebra and} $B$
\textit{be a subalgebra of} $A$. \textit{Then the Riesz subspace}
$\mathcal{R}(B)$ \textit{generated by} $B$ \textit{is an} $f$%
-\textit{subalgebra of} $A$.
\end{theorem}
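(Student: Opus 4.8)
The plan is to use the explicit construction of $\mathcal{R}(B)$ furnished by Theorem \ref{C}: writing $L_1 = B$ and $L_{n+1}$ for the vector subspace generated by $L_n \cup |L_n|$, we have $\mathcal{R}(B) = \bigcup_n L_n$. Since $\mathcal{R}(B)$ is already a Riesz subspace, it suffices to show it is closed under multiplication, and because multiplication is bilinear and the $L_n$ are increasing vector subspaces, it is enough to prove that for each $n$ the product of two elements of $L_n$ lies in $\mathcal{R}(B)$. I would attempt this by induction on $n$. The base case $n=1$ is immediate: $B$ is a subalgebra, so $L_1 \cdot L_1 \subseteq B \subseteq \mathcal{R}(B)$.

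The inductive step is where the real work lies. Assume $L_n \cdot L_n \subseteq \mathcal{R}(B)$. A general element of $L_{n+1}$ is a linear combination of elements of $L_n$ and of absolute values $|f|$ with $f \in L_n$; by bilinearity it suffices to handle the three product types $f\cdot g$, $f \cdot |g|$, and $|f|\cdot|g|$ for $f,g \in L_n$. For the first, the inductive hypothesis applies directly. For the others, the idea is to reduce the appearance of $|g|$ (equivalently of $g^{+}$, since $|g| = 2g^{+} - g$ and $g^{+} = \tfrac12(|g|+g)$) to products already known to lie in $\mathcal{R}(B)$. This is exactly what part (ii) of Lemma \ref{L1} is designed for: applied with $a = f$, $b = g$ it expresses $f^{+}g^{+}$ as
\[
f^{+}g^{+} = (fg)^{+}\wedge\bigl((f+f^{3})^{+} + (g+g^{3})^{+}\bigr).
\]
Now $fg \in L_n\cdot L_n \subseteq \mathcal{R}(B)$ by hypothesis, so $(fg)^{+} \in \mathcal{R}(B)$; and $f, f^{3}, g, g^{3}$ all lie in $\mathcal{R}(B)$ (using $L_n \cdot L_n \subseteq \mathcal{R}(B)$ twice to get $f^{3} = f\cdot f^{2}$, noting $f^{2} \in \mathcal{R}(B)$ and then that $\mathcal{R}(B)$ — being a subalgebra by this very induction carried one more step, or by treating the cube carefully — contains $f\cdot f^2$), hence $(f+f^{3})^{+}$ and $(g+g^{3})^{+}$ lie in $\mathcal{R}(B)$, and therefore so does their lattice combination $f^{+}g^{+}$. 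From $f^{+}g^{+} \in \mathcal{R}(B)$ one recovers $|f|\cdot|g|$, $f\cdot|g|$, etc., by the identities $|f||g| = (2f^{+}-f)(2g^{+}-g)$ expanded using bilinearity together with $f^{+}g^{-}$, $f^{-}g^{+}$, $f^{-}g^{-}$ handled symmetrically (replacing $f$ by $-f$ and/or $g$ by $-g$ in the lemma). This closes the induction and shows $\mathcal{R}(B)\cdot\mathcal{R}(B)\subseteq\mathcal{R}(B)$, so $\mathcal{R}(B)$ is an $f$-subalgebra of $A$ (the $f$-algebra identities are inherited from $A$).

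The main obstacle is the bookkeeping in the inductive step: one must be careful that the cubes $f^{3}, g^{3}$ appearing in Lemma \ref{L1}(ii) actually belong to $\mathcal{R}(B)$ at the stage of the induction where they are needed — this requires knowing that products of three elements of $L_n$ land in $\mathcal{R}(B)$, which follows from $L_n\cdot L_n\subseteq \mathcal{R}(B)$ together with the fact that $L_n\subseteq\mathcal{R}(B)$ and $\mathcal{R}(B)$ is a vector subspace closed under the products we are inductively establishing. A clean way to sidestep any circularity is to strengthen the induction hypothesis to: \emph{every product of finitely many elements of $L_n$ lies in $\mathcal{R}(B)$}, and prove that statement by induction on $n$; the base case is again immediate since $B$ is a subalgebra, and the step proceeds exactly as above. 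A secondary point to get right is the use of semiprimeness — it enters only through Lemma \ref{L1}, which is already granted, so no extra care is needed there beyond citing it.
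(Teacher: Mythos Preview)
Your overall strategy matches the paper's: build $\mathcal{R}(B)=\bigcup_n L_n$ via Theorem~\ref{C} and induct on $n$, using Lemma~\ref{L1}(ii) to produce $f^+g^+$. But there is a gap in your inductive step that your proposed fix does not close as written. You correctly see that invoking Lemma~\ref{L1}(ii) for $f,g\in L_n$ with $n\ge 2$ requires $f^3,g^3\in\mathcal{R}(B)$, and that $L_n\cdot L_n\subseteq\mathcal{R}(B)$ alone does not give this (your first justification, via ``$\mathcal{R}(B)$ is closed under the products we are inductively establishing,'' is circular, as you note). Your remedy---strengthen the hypothesis to ``all finite products of elements of $L_n$ lie in $\mathcal{R}(B)$'' and assert that the step ``proceeds exactly as above''---is incomplete: ``above'' treats only \emph{pairs} of $L_{n+1}$-generators, whereas the strengthened statement at level $n{+}1$ demands arbitrary $k$-fold products of such generators, which you never address. (It can be repaired: by commutativity and $|a|\,|b|=|ab|$, any such product collapses to $G\cdot|H|$ with $G,H$ finite products of $L_n$ elements, and then Lemma~\ref{L1}(ii) applies since $G^3,H^3$ are again finite products of $L_n$ elements---but that is exactly the missing argument.)

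The paper avoids the cube issue more economically by carrying a different auxiliary statement through the induction: $\mathcal{P}_n$ asserts that for all $f,g\in L_n$ \emph{both} $fg\in\mathcal{R}(B)$ \emph{and} $f|g|\in\mathcal{R}(B)$. With this formulation, Lemma~\ref{L1}(ii) is invoked \emph{only at the base case} $n=1$, where $f,g\in B$ and hence $f^3,g^3\in B$ for free since $B$ is a subalgebra. The inductive step then needs no cubes at all: one expands $f,g\in L_{n+1}$ as sums of $L_n$ elements and their absolute values, and each cross-term is handled using only the $f$-algebra identities $|x|\,|y|=|xy|$ and $p\,|x|=|px|$ for $p\ge 0$, together with the hypothesis $\mathcal{P}_n$. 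This is the cleaner way to close the induction.
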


\begin{proof}
We use the same construction as in Theorem 2.1 considering $B$ instead of
$F$. Then $\mathcal{R}(B)=\bigcup_{n=1}^{\infty}L_{n}$. All that remains is to prove
that $\mathcal{R}(B)$ is closed under multiplication. It suffices to show that
for all $n\in\mathbb{N}$ we have the following property%
\[
\mathcal{P}_{n}:\text{for all }f,\ g\in L_{n},\ fg\text{ and }f|g|\text{ are
in }\mathcal{R}(B).
\]

We will proceed by induction on $n$.

Let us show that $\mathcal{P}_{1}$ is true. Take $f,\ g\in L_{1}=B$. Then $fg
$ belongs to $B$ because $B$ is a subalgebra of $A$. We claim that
$f|g|\in\mathcal{R}(B)$. Observe first that $(f+f^{3})^{+},\ (g+g^{3})^{+}$
and $(fg)^{+}$ are in $\mathcal{R}(B)$. According to (ii) in Lemma \ref{L1} ,
we get $f^{+}g^{+}\in\mathcal{R}(B)$ . Since $f^{+}g^{-}=f^{+}(-g)^{+}%
\in\mathcal{R}(B)$ and $f^{-}g^{-}=(-f)^{+}(-g)^{+}\in\mathcal{R}(B)$ we
derive that $f|g|\in\mathcal{R}(B)$.

Suppose now that $\mathcal{P}_{n}$ is true and pick two elements $f$ and $g$
in $L_{n+1}$. Then $f$ and $g$ can be written as a finite sums
\begin{equation}
f=\sum_{\alpha}f_{\alpha}+\sum_{\beta}|f_{\beta}|\ \text{and}\ g=\sum_{\gamma
}g_{\gamma}+\sum_{\delta}|g_{\delta}|\text{ }\label{eq3}%
\end{equation}
where $f_{\alpha},\ f_{\beta},\ g_{\gamma}$ and $g_{\delta}$ belong to $L_{n}%
$. Applying the inductive hypothesis to these elements, a direct calculation
shows therefore that $f.g\in\mathcal{R}(B)$ . It remains to prove that
$f|g|\in\mathcal{R}(B)$. This is obvious in the case when $f$ is positive
because $f|g|=|fg|\in\mathcal{R}(B)$. In the general case, using the
decomposition of $f$ in (\ref{eq3}), it is sufficient to show that $f_{\alpha
}|g|\in\mathcal{R}(B)$ and $|f_{\beta}||g|\in\mathcal{R}(B)$. But $|f_{\beta
}||g|=|f_{\beta}g|\in\mathcal{R}(B)$ and since $f_{\alpha}^{+}$ and
$f_{\alpha}^{-}$ are in $L_{n+1}$ for all $\alpha$ we get by the positive
case
\[
f_{\alpha}.\ |g|=f_{\alpha}^{+}|g|-f_{\alpha}^{-}|g|\in\mathcal{R}(B)
\]
The proof is now complete.
\end{proof}

The next result is an operator-version of Theorem \ref{M1}. In fact if we
consider a Riesz homomorphism $T$ from $\mathcal{R}(B)$ to an $f$-algebra $C$
then $T$ is an algebra homomorphism if and only if its restriction to $B$ is
an algebra homomorphism.

\begin{theorem}
\label{MUP}\textit{Let} $B$ \textit{be a subalgebra of a semiprime}
$f$-\textit{algebra} $A$ \textit{and} $C$ \textit{be an} $f$-\textit{algebra.
Let} $T$ : $\mathcal{R}(B)\rightarrow C$ \textit{be a Riesz homomorphism.
Then}, $T$ \textit{is multiplicative if and only if} $T$ \textit{is
multiplicative on} $B$, \textit{that is} $T(fg)=T(f)T(g)$ \textit{for all}
$f,\ g\in B$.
\end{theorem}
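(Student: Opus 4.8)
The plan is to mirror the inductive structure of the proof of Theorem~\ref{M1}, but now tracking the operator $T$ through the filtration $L_1 = B \subseteq L_2 \subseteq \cdots$ with $\mathcal{R}(B) = \bigcup_n L_n$. One direction is trivial: if $T$ is multiplicative on all of $\mathcal{R}(B)$, it is multiplicative on the subalgebra $B$. For the converse, I would prove by induction on $n$ the statement
\[
\mathcal{Q}_n:\quad T(fg) = T(f)T(g)\ \text{and}\ T(f|g|) = T(f)\,|T(g)|\ \text{for all}\ f,g\in L_n .
\]
Note that $T(f|g|)=T(f)|T(g)|$ is the natural companion identity, since $T$ is a Riesz homomorphism and hence $T(|g|)=|T(g)|$; carrying it alongside $T(fg)=T(f)T(g)$ is exactly what makes the induction close, just as the auxiliary statement ``$f|g|\in\mathcal{R}(B)$'' was needed in Theorem~\ref{M1}.

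The base case $\mathcal{Q}_1$ is where the real work lies, and it is the main obstacle. For $f,g\in B$ the identity $T(fg)=T(f)T(g)$ is the hypothesis. For $T(f|g|)=T(f)|T(g)|$ I would reuse the key algebraic identity (ii) of Lemma~\ref{L1}:
\[
a^{+}b^{+}=(ab)^{+}\wedge\bigl((a+a^{3})^{+}+(b+b^{3})^{+}\bigr).
\]
Applying $T$ (a Riesz homomorphism, hence preserving $\wedge$, $(\cdot)^+$, and sums) and using that $T$ is multiplicative on $B$ to evaluate $T((ab))=T(a)T(b)$, $T(a^3)=T(a)^3$, $T(b^3)=T(b)^3$, one obtains
\[
T(a^{+}b^{+}) = (T(a)T(b))^{+}\wedge\bigl((T(a)+T(a)^{3})^{+}+(T(b)+T(b)^{3})^{+}\bigr)
= T(a)^{+}T(b)^{+},
\]
where the last equality is Lemma~\ref{L1}(ii) applied in the $f$-algebra $C$ to the elements $T(a),T(b)$. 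Then, exactly as in the proof of Theorem~\ref{M1}, splitting $f|g| = f^{+}g^{+} - f^{+}g^{-} - f^{-}g^{+} + f^{-}g^{-}$ (with each product handled by the identity just proved, replacing $a,b$ by $\pm f,\pm g\in B$) gives $T(f|g|)=T(f)|T(g)|$. Here one uses that $B$ is a subalgebra so that $f^{+}f^{-}$-type manipulations stay inside $B$ only through the $\pm$ substitutions, while $T$ being a Riesz homomorphism supplies $T(f^{\pm})=T(f)^{\pm}$.

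For the inductive step, assume $\mathcal{Q}_n$ and take $f,g\in L_{n+1}$. Write them as finite sums $f=\sum_\alpha f_\alpha+\sum_\beta|f_\beta|$ and $g=\sum_\gamma g_\gamma+\sum_\delta|g_\delta|$ with all terms in $L_n$, exactly as in~\eqref{eq3}. By bilinearity of multiplication and linearity of $T$, proving $T(fg)=T(f)T(g)$ reduces to the four types of products of generators: $T(f_\alpha g_\gamma)$, $T(f_\alpha|g_\delta|)$, $T(|f_\beta|g_\gamma)$, and $T(|f_\beta||g_\delta|)=T(|f_\beta g_\delta|)$, each of which is handled by $\mathcal{Q}_n$ together with $T(|x|)=|T(x)|$. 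For $T(f|g|)=T(f)|T(g)|$, I would argue as in Theorem~\ref{M1}: when $f\ge 0$ we have $f|g|=|fg|$, so $T(f|g|)=|T(fg)|=|T(f)T(g)|=T(f)|T(g)|$ using the already-established $T(fg)=T(f)T(g)$ and $f\ge0\Rightarrow T(f)\ge0$; in general decompose $f=\sum f_\alpha+\sum|f_\beta|$, note $|f_\beta||g|=|f_\beta g|$ is covered, and for the $f_\alpha$ terms use $f_\alpha=f_\alpha^{+}-f_\alpha^{-}$ with $f_\alpha^{\pm}\in L_{n+1}$ positive, reducing to the positive case. Taking the union over $n$ then yields multiplicativity of $T$ on all of $\mathcal{R}(B)$. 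The only genuinely delicate point throughout is the base case identity $T(f^{+}g^{+})=T(f)^{+}T(g)^{+}$; once Lemma~\ref{L1}(ii) is invoked on both sides the rest is bookkeeping.
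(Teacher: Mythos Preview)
Your proposal is correct and follows essentially the same route as the paper: the same inductive statement $\mathcal{Q}_n=\mathcal{P}_n'$ on the filtration $(L_n)$, the same use of Lemma~\ref{L1}(ii) together with the Riesz homomorphism property to obtain $T(f^{+}g^{+})=T(f)^{+}T(g)^{+}$ in the base case, and the same bookkeeping in the inductive step. One small slip: the displayed decomposition $f|g|=f^{+}g^{+}-f^{+}g^{-}-f^{-}g^{+}+f^{-}g^{-}$ is actually the expansion of $fg$, not of $f|g|$; the correct formula is $f|g|=f^{+}g^{+}+f^{+}g^{-}-f^{-}g^{+}-f^{-}g^{-}$, but this does not affect the argument.
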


\begin{proof}
Only one implication requires proof. Assume then that $T(fg)=T(f)T(g)$ for all
$f,\ g\in B$. We proceed as in proof of Theorem \ref{M1} by proving a similar
property to $\mathcal{P}_{n}$ which is the following%
\[
\mathcal{P}_{n}^{\prime}:\text{for all }f,\ g\in L_{n},\ T(fg)=TfTg\text{ and
}T(f\ |g|)=T(f)|Tg|\text{.}%
\]

Pick tow elements $f\ $and $g$ in $B$. Since $T$ is multiplicative on $B$, we
get $T(fg)=T(f)T(g)$. We claim that $T(f\ |g|)=T(f)T(|g|)$. Using the fact
that $T$ is a Riesz homomorphism and according to Lemma \ref{L1} we can write%
\begin{align*}
T(f^{+}g^{+}) &  =T((fg)^{+})\wedge((T(f)+T(f^{3}))^{+}+(T(g)+T(g^{3}))^{+})\\
&  =(T(f)T(g))^{+}\wedge((T(f)+T(f)^{3})^{+}+(T(g)+T(g)^{3})^{+})\\
&  =T(f)^{+}T(g)^{+}.
\end{align*}

Slight modifications in the above equalities yields to:
\[
T(f^{-}g^{+})=T(f)^{-}T(g)^{+},\ T(f^{-}g^{+})=T(f)^{-}T(g)^{+}\text{ and
}T(f^{+}g^{-})=T(f)^{+}T(g)^{-}%
\]

This implies that
\[
T(f|g|)=T(f)T(|g|).
\]

So $\mathcal{P}_{1}^{\prime}$ is true. It is clear now that the rest of the
proof is similar to the previous one.
\end{proof}

\section{Some results on bilinear maps.}

In this section we will collect some proprieties of bilinear maps between
$f$-algebras. Recall that for a map $S:X\longrightarrow Y$, where $X$ and $Y$
are two vector spaces, the zero-set of $S$ is the set
\[
N(S)=\left\{  x\in X:S(x)=0\right\}  \text{.}%
\]

It is well known that if $\varphi$ and $\psi$ are two linear functionals on a
vector space $E$ such that $N\left(  \varphi\right)  \subseteq N\left(
\psi\right)  ,$ then there exists a scalar $\lambda$ such that $\psi
=\lambda\varphi$. The following lemma shows that this result can be extended
to bilinear functionals.

\begin{lemma}
\label{AZ}Let $E$ and $F$ be two vector spaces, $\varphi,\psi:E\times
F\longrightarrow\mathbb{R}$ be two bilinear functionals on $E\times F$ such
that $N\left(  \varphi\right)  \subseteq N\left(  \psi\right)  $ then
$\psi=\lambda.\varphi$ for some $\lambda\in\mathbb{R}$.
\end{lemma}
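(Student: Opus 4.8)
The plan is to reduce the bilinear statement to the known linear one by fixing appropriate slots. First I would dispose of the degenerate case: if $\varphi \equiv 0$, then $N(\varphi) = E \times F$, so $N(\psi) = E \times F$ as well, i.e. $\psi \equiv 0$, and any $\lambda$ works. So assume $\varphi \not\equiv 0$, and fix $(x_0, y_0) \in E \times F$ with $\varphi(x_0, y_0) \neq 0$. Define $\lambda = \psi(x_0,y_0)/\varphi(x_0,y_0)$; the goal is to show $\psi = \lambda \varphi$ with this $\lambda$. Replacing $\psi$ by $\psi - \lambda \varphi$ (which still satisfies $N(\varphi) \subseteq N(\psi - \lambda\varphi)$ since $N(\varphi) \subseteq N(\psi)$ and $N(\varphi) \subseteq N(\varphi)$ trivially, and this new functional vanishes at $(x_0,y_0)$), it suffices to prove the following: if $N(\varphi) \subseteq N(\psi)$ and $\psi(x_0,y_0)=0$ while $\varphi(x_0,y_0) \neq 0$, then $\psi \equiv 0$.

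The key observation is a \emph{rank-one} phenomenon: for fixed $y \in F$, the maps $x \mapsto \varphi(x,y)$ and $x \mapsto \psi(x,y)$ are linear functionals on $E$. I would like to invoke the linear result, so I need $N(\varphi(\cdot,y)) \subseteq N(\psi(\cdot,y))$. This is not immediate because $\varphi(x,y)=0$ does not force $(x,y) \in N(\varphi)$ in the bilinear sense — $N(\varphi)$ here is the set of pairs where $\varphi$ evaluates to zero, so actually $\varphi(x,y)=0$ \emph{does} mean $(x,y) \in N(\varphi) \subseteq N(\psi)$, hence $\psi(x,y)=0$. So in fact for every fixed $y$, $N(\varphi(\cdot,y)) \subseteq N(\psi(\cdot,y))$ holds directly. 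By the linear result, for each $y$ there is a scalar $\mu(y)$ with $\psi(\cdot,y) = \mu(y)\,\varphi(\cdot,y)$ — provided $\varphi(\cdot,y) \not\equiv 0$; if $\varphi(\cdot,y) \equiv 0$ then $N(\varphi(\cdot,y)) = E$, forcing $\psi(\cdot,y) \equiv 0$, so we may set $\mu(y)=0$ (or anything) in that case. Symmetrically, for each fixed $x$ with $\varphi(x,\cdot)\not\equiv 0$ there is $\nu(x)$ with $\psi(x,\cdot) = \nu(x)\,\varphi(x,\cdot)$.

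Now I would show $\mu$ is constant on the set where $\varphi(\cdot,y)\not\equiv 0$. Take $y_1, y_2$ with $\varphi(\cdot,y_i)\not\equiv 0$. Pick $x$ such that $\varphi(x,y_1)\neq 0$; if also $\varphi(x,y_2)\neq 0$ then from $\psi(x,y_i)=\mu(y_i)\varphi(x,y_i)$ and $\psi(x,\cdot)=\nu(x)\varphi(x,\cdot)$ we get $\mu(y_1)=\nu(x)=\mu(y_2)$. The case $\varphi(x,y_2)=0$ is handled by a small perturbation: replace $x$ by $x + t x'$ for a suitable $x'$ and scalar $t$ so that both $\varphi(x+tx',y_1)\neq 0$ and $\varphi(x+tx',y_2)\neq 0$ — this is possible because each condition excludes only a proper affine subset of the line, and a line over $\mathbb{R}$ is not the union of two proper affine subspaces (here, avoiding two points). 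Thus $\mu \equiv \lambda'$ is constant, so $\psi(x,y) = \lambda' \varphi(x,y)$ whenever $\varphi(\cdot,y)\not\equiv 0$, and $\psi(x,y)=0=\lambda'\varphi(x,y)$ when $\varphi(\cdot,y)\equiv 0$. Hence $\psi = \lambda' \varphi$ identically. Applying this to $\psi - \lambda\varphi$ in place of $\psi$ (which vanishes at $(x_0,y_0)$ where $\varphi$ does not), the constant must be $0$, giving $\psi = \lambda\varphi$.

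The main obstacle is the bookkeeping in showing $\mu$ is a single constant rather than possibly varying with $y$ across different ``components'' — concretely, ensuring one can always find a common test vector $x$ making $\varphi$ nonzero at two chosen values $y_1,y_2$ simultaneously. The resolution is the elementary fact that a real vector space (indeed a line) is not a finite union of proper affine subspaces, which lets us perturb into general position; once $\mu$ is pinned down as constant, everything else is immediate from the linear case and from the hypothesis $N(\varphi)\subseteq N(\psi)$ read pairwise.
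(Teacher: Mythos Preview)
Your proof is correct and follows essentially the same approach as the paper's: fix one variable to reduce to the classical linear fact, obtain a proportionality scalar depending on that variable, and then show the scalar is constant via the ``a vector space is not the union of two proper subspaces'' argument. Two minor remarks: your initial reduction (replacing $\psi$ by $\psi-\lambda\varphi$) is unnecessary, since your steps already prove $\psi=\lambda'\varphi$ for some $\lambda'$ directly; and where you introduce both $\mu(y)$ and $\nu(x)$ and link them at a common point, the paper instead stays with a single family $\lambda(e)$ and uses the trick $\varphi(e-\alpha f,g)=0\Rightarrow\psi(e-\alpha f,g)=0$ to compare $\lambda(e)$ and $\lambda(f)$ in one stroke.
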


\begin{proof}
If $\varphi=0$ the result is obvious. Otherwise, pick a fixed element $e\in E
$ and consider the linear functionals $\varphi_{e}=\varphi\left(  e,.\right)
$ and $\psi_{e}=\psi\left(  e,.\right)  .$ Since $N\left(  \varphi_{e}\right)
\subseteq N\left(  \psi_{e}\right)  $, there exists some real number
$\lambda\left(  e\right)  $ such that
\begin{equation}
\psi\left(  e,h\right)  =\lambda\left(  e\right)  .\varphi\left(  e,h\right)
\text{ for all }h\in E\text{.}\label{A}%
\end{equation}

We claim that $\lambda(e)$ does not depend on $e$. If $\varphi_{e}=0,$ then
$\psi_{e}=0$ and any real $\lambda\left(  e\right)  $ is convenient. It is
sufficient then to show that $\lambda\left(  e\right)  =\lambda\left(
f\right)  $ for all $e,f$ such that $\varphi_{e}$ and $\varphi_{f}$ are not
null linear functionals. In this case there exists $g\in F$ such that
$\varphi\left(  e,g\right)  \neq0$ and $\varphi\left(  f,g\right)  \neq0$ ($E$
cannot be the union of two strict subspaces). Put $\alpha=\dfrac
{\varphi\left(  e,g\right)  }{\varphi\left(  f,g\right)  }$ and observe that
$\varphi\left(  e-\alpha f,g\right)  =0.$ So $\psi\left(  e-\alpha f,g\right)
=0$ which implies that%
\[
\psi\left(  e,g\right)  =\alpha\psi\left(  f,g\right)  .
\]
We obtain
\[
\lambda\left(  e\right)  \varphi\left(  e,g\right)  =\alpha\lambda\left(
f\right)  \varphi\left(  f,g\right)  =\lambda\left(  f\right)  \varphi\left(
e,g\right)
\]
which proves that $\lambda(e)=\lambda(f)$ and we are done.
\end{proof}

As a consequence of the previous lemma we obtain the following result.

\begin{proposition}
\label{AZZ}Let $A_{1}$ and $A_{2}$ be $f$-algebras with unit elements $e_{1}$
and $e_{2},$ respectively, and let $T:A_{1}\times A_{2}\longrightarrow
\mathbb{R}$ be a Riesz bimorphism such that $T\left(  e_{1},e_{2}\right)  =1$.
Then $T$ is multiplicative.
\end{proposition}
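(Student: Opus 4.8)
The plan is to reduce multiplicativity of the bimorphism $T$ to the scalar identity via Lemma~\ref{AZ}, exploiting the rich supply of bilinear functionals that a Riesz bimorphism into $\mathbb{R}$ provides. First I would recall that a Riesz bimorphism $T:A_1\times A_2\to\mathbb{R}$ induces, for each fixed $0\le a\in A_1$, a lattice homomorphism $T(a,\cdot):A_2\to\mathbb{R}$, i.e.\ a nonzero multiplicative linear functional (a character) once normalised, because nonzero lattice homomorphisms into $\mathbb{R}$ on an $f$-algebra with unit are exactly the multiplicative ones (this is the classical fact that the only Riesz homomorphisms $\mathbb{R}$-valued on a unital $f$-algebra are the ``point evaluations'', hence ring homomorphisms). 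Symmetrically for fixed $0\le b\in A_2$. So the strategy is to show that $T$ agrees with the bilinear functional $(a,b)\mapsto \chi(a)\eta(b)$ for suitable characters $\chi$ on $A_1$ and $\eta$ on $A_2$, and then multiplicativity of $T$ is immediate from multiplicativity of $\chi$ and $\eta$.

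To make this precise, I would set $\chi(a)=T(a,e_2)$ and $\eta(b)=T(e_1,b)$. Both are Riesz homomorphisms $A_i\to\mathbb{R}$, and $\chi(e_1)=\eta(e_2)=T(e_1,e_2)=1$, so neither is the zero functional; by the unital-$f$-algebra fact just recalled, $\chi$ and $\eta$ are algebra homomorphisms. Now consider the two bilinear functionals on $A_1\times A_2$ given by $T$ itself and by $\varphi(a,b)=\chi(a)\eta(b)$. I would like to apply Lemma~\ref{AZ} to these, which requires checking $N(\varphi)\subseteq N(T)$ or the reverse. The cleaner route is: fix $a\in A_1^+$; then $T(a,\cdot)$ is a Riesz homomorphism $A_2\to\mathbb{R}$ with $T(a,e_2)=\chi(a)$. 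If $\chi(a)\ne 0$, then $\tfrac{1}{\chi(a)}T(a,\cdot)$ is a unital Riesz homomorphism, hence equals the unique character it induces; comparing at $e_2$ and using that characters of $A_2$ form... — actually the fastest finish is to note $T(a,\cdot)$ and $\chi(a)\,\eta(\cdot)$ are both Riesz homomorphisms agreeing at $e_2$, and two unital Riesz homomorphisms $A_2\to\mathbb{R}$ need not coincide in general, so I instead invoke Lemma~\ref{AZ} directly: I claim $N(T)\subseteq N(\varphi)$. Indeed if $T(a,b)=0$ for all... no — rather, the correct application is to a \emph{single} pair of functionals where the zero-set containment genuinely holds, and I would verify it by the argument inside Lemma~\ref{AZ}'s proof adapted here. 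Granting $N(T)=N(\varphi)$ (or one inclusion), Lemma~\ref{AZ} gives $\varphi=\lambda T$, and evaluating at $(e_1,e_2)$ forces $\lambda=1$, so $T(a,b)=\chi(a)\eta(b)$ for all $a,b$, whence $T((ab),(cd))=\chi(ab)\eta(cd)=\chi(a)\chi(b)\eta(c)\eta(d)$, giving the desired $T(a,c)T(b,d)=T(ac,bd)$ after regrouping — i.e.\ $T$ is multiplicative.

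The main obstacle I anticipate is exactly the hinge of the argument: establishing that $T(a,b)$ factors as $\chi(a)\eta(b)$, equivalently that the family of linear functionals $\{T(a,\cdot):a\in A_1^+\}$ is (up to scalars) a single character on $A_2$ — this is where the bimorphism hypothesis on \emph{both} variables, not just one, must be used. The resolution is to run the Lemma~\ref{AZ} mechanism: for fixed $b$ with $\eta(b)\ne 0$, the functional $a\mapsto T(a,b)$ is a Riesz homomorphism on $A_1$ nonzero at $e_1$, hence a character $\chi_b$; one then shows $\chi_b$ is independent of $b$ by the same ``$E$ is not a union of two proper subspaces'' trick used in Lemma~\ref{AZ}, applied to the pair $(a,b),(a,b')$. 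Once $\chi_b\equiv\chi$ is established, symmetry gives the full factorisation and the proof closes in a line. I would therefore organise the write-up as: (1) $\chi,\eta$ are characters; (2) for each $b$ with $\eta(b)\neq 0$, $T(\cdot,b)=\eta(b)\,\chi$ by normalisation plus the unital-homomorphism identification; (3) conclude $T(\cdot,b)=\eta(b)\chi$ for all $b$ by continuity/linearity in $b$ and density of $\{b:\eta(b)\neq0\}+\ker\eta$; (4) read off multiplicativity.
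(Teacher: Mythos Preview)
Your plan hinges entirely on the factorisation $T(a,b)=\chi(a)\eta(b)$, and none of the three routes you sketch actually establishes it. Step~(2) of your final organisation only shows that $\eta(b)^{-1}T(\cdot,b)$ is \emph{some} character $\chi_b$ on $A_1$ (for $b\ge 0$ with $\eta(b)>0$); you give no reason why $\chi_b=\chi$, and a unital $f$-algebra generally carries many $\mathbb{R}$-valued characters---a point you yourself concede a few lines earlier. The ``not a union of two proper subspaces'' device from Lemma~\ref{AZ} pins down a \emph{scalar} $\lambda(e)$ as constant in $e$; it does not show that a \emph{functional} $\chi_b$ is constant in $b$, so invoking ``the same trick'' here is a category error. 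And the zero-set inclusion between $T$ and $\varphi=\chi\otimes\eta$ is simply granted, never verified. As written, the proposal restates the conclusion at its crucial step.

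The paper sidesteps the factorisation entirely. For fixed $0\le a_i\le e_i$ it applies Lemma~\ref{AZ} to the pair consisting of $T$ and $S(x_1,x_2)=T(a_1x_1,a_2x_2)$; the needed inclusion $N(T)\subseteq N(S)$ is a one-line estimate, since $|a_ix_i|\le|x_i|$ gives $|S(x_1,x_2)|\le|T(x_1,x_2)|$. This yields $S=\lambda T$ with $\lambda=S(e_1,e_2)=T(a_1,a_2)$, which is exactly multiplicativity for such $a_i$. General positive $a_i$ then follow from $a_i\wedge ne_i\to a_i$ relatively uniformly together with Lemma~\ref{Cont}. No external fact about $\mathbb{R}$-valued characters is used.

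Your route can be repaired, but not by the mechanism you propose. The correct inclusion is $N(\varphi)\subseteq N(T)$: if $\chi(a)=0$ then $T(|a|,e_2)=0$, whence $0\le T(|a|,y\wedge ne_2)\le nT(|a|,e_2)=0$ for every $y\ge0$, and relatively uniform convergence $y\wedge ne_2\to y$ forces $T(|a|,\cdot)\equiv 0$; symmetrically when $\eta(b)=0$. Lemma~\ref{AZ} then gives $T=\varphi$ after evaluating at $(e_1,e_2)$. Note, however, that this still uses the same approximation ingredient as the paper's argument and additionally imports the fact that unital Riesz homomorphisms into $\mathbb{R}$ are multiplicative, so nothing is actually saved.
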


\begin{proof}
Let $a_{i}\in A_{i}$ such that $0\leq a_{i}\leq e_{i},$ $i=1,2$. We define the
bilinear functional $S$ by
\[
S\left(  x_{1},x_{2}\right)  =T\left(  a_{1}x_{1},a_{2}x_{2}\right)  \text{
for all }\left(  x_{1},x_{2}\right)  \in A_{1}\times A_{2}\text{.}%
\]

It follows from the following%
\[
\left\vert T\left(  a_{1}x_{1},a_{2}x_{2}\right)  \right\vert =T\left(
\left\vert a_{1}x_{1}\right\vert ,\left\vert a_{2}x_{2}\right\vert \right)
\leq T\left(  \left\vert x_{1}\right\vert ,\left\vert x_{2}\right\vert
\right)  =\left\vert T\left(  x_{1},x_{2}\right)  \right\vert
\]
that $N(T)\subseteq N(S)$. According to Lemma \ref{AZ} there exists a real
number $\lambda$ such that $S=\lambda T$. By hypothesis,%
\[
\lambda=\lambda.T(e_{1},e_{2})=S(e_{1},e_{2})=T(a_{1},a_{2}).
\]
So one has
\begin{equation}
T(a_{1}x_{1},a_{2}x_{2})=T(a_{1},a_{2}).T(x_{1},x_{2})\text{ for all }x_{1}\in
A_{1}\text{, }x_{2}\in A_{2}\label{3}%
\end{equation}

Now if $a_{1}$ and $a_{2}$ are positive, we get for all $n=1,2,..$ and
$x_{1}\in A_{1},$ $x_{2}\in A_{2},$
\[
T((a_{1}\wedge ne_{1})x_{1},(a_{2}\wedge ne_{2})x_{2})=T\left(  a_{1}\wedge
ne_{1},a_{2}\wedge ne_{2}\right)  T(x_{1},x_{2})
\]
The equality
\[
T(a_{1}x_{1},a_{2}x_{2})=T(a_{1},a_{2}).T(x_{1},x_{2})\text{ for all }x_{1}\in
A_{1}\text{ and }x_{2}\in A_{2}\text{.}%
\]
can be deduced by relatively uniform continuity technics and Lemma \ref{Cont}.
\end{proof}

The next theorem gives the connection between Riesz bimorphisms and
multiplicative bilinear maps on $f$-algebras. This theorem is proved in
\cite[Theorem 1]{Boulabiar} and we give here an alternative proof which can be
adapted to multilinear maps.

\begin{theorem}
\label{Kar}Let $A_{1},$ $A_{2}$ and $B$ be $f$-algebras with unit elements
$e_{1},e_{2}$ and $e_{B}$, respectively and $T:A_{1}\times A_{2}%
\longrightarrow B$ be a lattice bimorphism satisfying $T\left(  e_{1}%
,e_{2}\right)  =e_{B}$. Then $T$ is multiplicative bilinear map.
\end{theorem}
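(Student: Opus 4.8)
The plan is to reduce the bimorphism $T:A_{1}\times A_{2}\to B$ to a family of scalar-valued bimorphisms indexed by the structure space of $B$, and then to invoke Proposition \ref{AZZ}. Concretely, the first step is to represent $B$ as an $f$-algebra of extended-real-valued continuous functions on a compact (Hausdorff) space, or more robustly, to pass to the universal completion $B^{u}$, which is an $f$-algebra with unit $e_{B}$ and hence, by Yosida-type representation, can be viewed as a space of continuous functions. For each multiplicative Riesz homomorphism (character) $\omega:B\to\mathbb{R}$, consider the composite $\omega\circ T:A_{1}\times A_{2}\to\mathbb{R}$. Since $\omega$ is a lattice homomorphism and $T$ is a lattice bimorphism, $\omega\circ T$ is again a Riesz bimorphism, and $(\omega\circ T)(e_{1},e_{2})=\omega(e_{B})=1$. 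Proposition \ref{AZZ} then applies verbatim and gives
\[
\omega\bigl(T(a_{1}b_{1},a_{2}b_{2})\bigr)=\omega\bigl(T(a_{1},a_{2})\bigr)\,\omega\bigl(T(b_{1},b_{2})\bigr)=\omega\bigl(T(a_{1},a_{2})\,T(b_{1},b_{2})\bigr)
\]
for all $a_{i},b_{i}\in A_{i}$, where the last equality uses that $\omega$ is multiplicative on $B$.

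The second step is to conclude that $T(a_{1}b_{1},a_{2}b_{2})=T(a_{1},a_{2})\,T(b_{1},b_{2})$ from the fact that every character of $B$ agrees on these two elements. This is where one must be careful: an arbitrary $f$-algebra with unit need not be semisimple (its characters need not separate points). The clean fix is to work inside $B^{u}$, the universal completion: there $e_{B}$ is a weak order unit, $B^{u}$ is a uniformly complete semiprime $f$-algebra with unit, and — being universally complete and hence of the form $C_{\infty}(K)$ for an extremally disconnected compact $K$ — it is semisimple, so its characters do separate points. Thus $x,y\in B^{u}$ with $\omega(x)=\omega(y)$ for all characters $\omega$ forces $x=y$. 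Alternatively, one avoids representation altogether by using the order-theoretic description of characters on $C_{\infty}(K)$ together with the fact that in a semiprime $f$-algebra $x=0$ iff $x^{2}=0$; but the representation route is shorter to state.

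The third step is bookkeeping: Proposition \ref{AZZ} as stated handles $T(a_{1}x_{1},a_{2}x_{2})=T(a_{1},a_{2})T(x_{1},x_{2})$ for all $x_{1},x_{2}$, which is already the full multiplicativity statement once we rename $x_{i}=b_{i}$ — so the scalar case really does deliver exactly what we need, and no extra approximation is required at the level of $B$. One should still note explicitly that $\omega\circ T$ inherits positivity and the lattice-bimorphism property (immediate, since $\omega\ge 0$ and $\omega$ is a Riesz homomorphism), and that $\omega\circ T$ is multiplicative on $B$ in the sense Proposition \ref{AZZ} needs — here "multiplicative" in the hypothesis of \ref{AZZ} is automatic because \ref{AZZ} concludes multiplicativity from the normalization alone.

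The main obstacle I anticipate is precisely the passage from "every character kills $T(a_{1}b_{1},a_{2}b_{2})-T(a_{1},a_{2})T(b_{1},b_{2})$" to "this element is $0$": one must justify that the relevant characters separate points, which is false in a general unital $f$-algebra and requires either moving to $B^{u}$ (or to the semiprime quotient, but $B$ here need not be semiprime a priori) or imposing semiprimeness on $B$. I would resolve it by the $B^{u}$ embedding, remarking that $T$ composed with the inclusion $B\hookrightarrow B^{u}$ is still a lattice bimorphism into a unital $f$-algebra, proving multiplicativity there, and then descending since the image lies in $B$. Everything else — checking that composites of Riesz homomorphisms with bimorphisms are bimorphisms, and the normalization — is routine and can be dispatched in a line or two.
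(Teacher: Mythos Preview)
Your overall strategy---compose $T$ with real-valued multiplicative Riesz homomorphisms and invoke Proposition~\ref{AZZ}---is exactly the paper's. The gap is in your separation step. You claim that $B^{u}\cong C_{\infty}(K)$ is semisimple, i.e.\ that its real-valued characters separate points. This is false in general: take $B^{u}=L^{0}[0,1]$, which is universally complete with unit $1$. If $\omega:L^{0}\to\mathbb{R}$ were a Riesz homomorphism with $\omega(1)=1$, then $\{A:\omega(\chi_{A})=1\}$ would be an ultrafilter on the measure algebra; by nonatomicity one finds a strictly decreasing chain $A_{1}\supset A_{2}\supset\cdots$ in this ultrafilter with $\mu(A_{n})\to 0$, and then $g=\sum_{n}\chi_{A_{n}}\in L^{0}$ satisfies $\omega(g)\geq n$ for every $n$, a contradiction. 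So $L^{0}[0,1]$ has \emph{no} nonzero characters at all, and your embedding into $B^{u}$ buys nothing. (You may be conflating ``semiprime'' with ``semisimple''; $C_{\infty}(K)$ is always the former but rarely the latter.)

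The paper circumvents this by a localization you did not consider: for fixed $a_{i},x_{i}\in A_{i}$ it passes to the $f$-subalgebras $C_{i}\subset A_{i}$ generated by $\{e_{i},a_{i},x_{i}\}$, and to the $f$-subalgebra $D\subset B$ generated by $T(C_{1}\times C_{2})$. The point is that $D$ is \emph{finitely generated}, and by \cite[Corollary~7]{BP} the real-valued multiplicative lattice homomorphisms on a finitely generated $f$-algebra \emph{do} separate points. Applying Proposition~\ref{AZZ} to each $\omega\circ T|_{C_{1}\times C_{2}}$ with $\omega$ ranging over these characters of $D$ then yields $T(a_{1}x_{1},a_{2}x_{2})=T(a_{1},a_{2})T(x_{1},x_{2})$ inside $D\subset B$. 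This finite-generation trick is the missing idea in your proposal.
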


\begin{proof}
Let $a_{i},x_{i}\in A_{i}$ for $i=1,2$ and consider the the $f$-subalgebra
$C_{i}$ of $A_{i}$ generated by $\left\{  e_{i},x_{i},a_{i}\right\}  .$ Let
$D$ be the $f$-subalgebra of $B$ generated by $T\left(  C_{1}\times
C_{2}\right)  $ which is finitely generated. Take now an arbitrary
multiplicative positive functional $\omega$ on $D$. Applying Proposition
\ref{AZZ} to the bilinear functional $\omega\circ T:C_{1}\times C_{2}%
\longrightarrow\mathbb{R}$, we get
\[
\omega\left(  T\left(  a_{1}x_{1},a_{2}x_{2}\right)  \right)  =\omega\left(
T\left(  a_{1},a_{2}\right)  \right)  .\omega\left(  T\left(  x_{1}%
,x_{2}\right)  \right)  =\omega\left(  T\left(  a_{1},a_{2}\right)  T\left(
x_{1},x_{2}\right)  \right)
\]

Since the set of all real-valued multiplicative lattice homomorphisms on $D$
separates the points of $D$ (see \cite[Corollary 7]{BP}) we obtain%
\[
T\left(  a_{1}x_{1},a_{2}x_{2}\right)  =T\left(  a_{1},a_{2}\right)  T\left(
x_{1},x_{2}\right)  \text{ for all }a_{1},x_{1}\in A_{1}\text{ and }%
a_{2},x_{2}\in A_{2}%
\]

This concludes the proof.
\end{proof}

\section{The Riesz tensor product of $f$-algebras}

The tensor product of two $f$-algebras has not been studied before. It is
natural to think to endow the Riesz tensor product $A\overline{\otimes}B$ of
$f$-algebras with an $f$-algebra structure. A first way is to extend the
natural multiplication on the algebraic tensor product $A\otimes B$ to
$A\overline{\otimes}B.$ Another approach consists to embed $A\overline
{\otimes}B$ in some $f$-algebra $C$ and study the stability of $A\overline
{\otimes}B $ by multiplication. Focus first on the special and important
$C\left(  X\right)  $'s case.

Let $X$ and $Y$ be compact spaces. The algebraic tensor product $C(X)\otimes
C(Y)$ of the function-algebras $C(X)$ and $C(Y)$ can be viewed as the vector
subspace of $C(X\times Y)$ consisting of all functions $h(s,t)=\sum
f_{i}(s).g_{i}(t)$ where $f_{i}\in C(X)$ and $g_{i}\in C(Y)$ . The Riesz
tensor product $C(X)\overline{\otimes}C(Y)$ is the Riesz subspace of
$C(X\times Y)$ generated by the algebra $C(X)\otimes C(Y)$ (see for instance
\cite{S1980}). The following result is now an immediate consequence of Theorem
\ref*{M1}.

\begin{theorem}
Let $X$ and $Y$ be compact spaces. Then the Riesz tensor product
$C(X)\overline{\otimes}C(Y)$ of $C(X)$ and $C(Y)$ is an $f$-subalgebra of
$C(X\times Y)$.
\end{theorem}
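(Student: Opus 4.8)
The plan is to deduce the theorem directly from Theorem~\ref{M1}. First I would observe that $C(X\times Y)$ is a semiprime $f$-algebra: it is an $f$-algebra (the standard example) and it is semiprime because it has a unit element and, more concretely, $h^2=0$ forces $h=0$ pointwise. Next I would identify the algebraic tensor product $C(X)\otimes C(Y)$ with the subspace of $C(X\times Y)$ consisting of functions $h(s,t)=\sum_{i}f_i(s)g_i(t)$; this is the classical identification, valid for compact $X$ and $Y$, and it is a subalgebra of $C(X\times Y)$ since the product of two such finite sums is again such a finite sum. Call this subalgebra $B$.

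Then the key point is that, by the definition recalled just before the statement, the Riesz tensor product $C(X)\overline{\otimes}C(Y)$ is precisely $\mathcal{R}(B)$, the Riesz subspace of $C(X\times Y)$ generated by $B$. Applying Theorem~\ref{M1} with $A=C(X\times Y)$ (semiprime $f$-algebra) and this subalgebra $B$, we conclude that $\mathcal{R}(B)$ is an $f$-subalgebra of $C(X\times Y)$. Since $\mathcal{R}(B)=C(X)\overline{\otimes}C(Y)$, this is exactly the assertion.

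There is essentially no main obstacle here; the theorem is stated in the paper as an ``immediate consequence'' of Theorem~\ref{M1}, and the only things to check are the two routine facts flagged above: that $C(X\times Y)$ is semiprime, and that the image of $C(X)\otimes C(Y)$ in $C(X\times Y)$ is a genuine subalgebra. The mildest subtlety worth a sentence is making sure the identification of $C(X)\overline{\otimes}C(Y)$ with $\mathcal{R}(C(X)\otimes C(Y))$ matches the construction $\mathcal{R}(\cdot)=\bigcup_n L_n$ from Theorem~\ref{C}, but this is immediate since $\mathcal{R}(D)$ is by definition the smallest Riesz subspace containing $D$, independently of how one constructs it. I would therefore write the proof in three short lines: semiprimeness of $C(X\times Y)$, subalgebra-ness of $C(X)\otimes C(Y)$, and invocation of Theorem~\ref{M1}.
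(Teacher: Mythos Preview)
Your proposal is correct and matches the paper's approach exactly: the paper states this theorem as an ``immediate consequence of Theorem~\ref{M1}'' and does not even supply a proof, relying on precisely the identification of $C(X)\overline{\otimes}C(Y)$ with $\mathcal{R}(C(X)\otimes C(Y))$ inside the semiprime $f$-algebra $C(X\times Y)$ that you spell out.
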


In the sequel we study the general case. Let $A$ and $B$ be two semiprime
$f$-algebras. The algebraic tensor product $A\otimes B$ can be furnished in a
canonical way with an algebra product satisfying%
\[
(a\otimes b).(a^{\prime}\otimes b^{\prime})=aa^{\prime}\otimes bb^{\prime
}\text{ for all }a,a^{\prime}\in A\text{ and }b,b^{\prime}\in B.
\]

We start our discussion about the proprities of the Riesz tensor product with
this lemma.

\begin{lemma}
\label{L2}Let $A$ and $B$ be two $f$-algebras with unit elements $e_{A}$ and
$e_{B}$ respectively, then $e_{A}\otimes e_{B}$ is a weak order unit in
$A\overline{\otimes}B$.
\end{lemma}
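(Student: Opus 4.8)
The plan is to prove the equivalent statement that the band generated by $e_{A}\otimes e_{B}$ in $A\overline{\otimes}B$ is the whole space; recall that a positive element of an Archimedean Riesz space is a weak order unit exactly when it generates the space as a band. Denote this band by $\mathcal{B}$. Since $A\overline{\otimes}B$ is, by construction, the Riesz subspace generated by the algebraic tensor product $A\otimes B$, and every band is a Riesz subspace, it suffices to show that each elementary tensor $a\otimes b$ lies in $\mathcal{B}$. Writing $a=a^{+}-a^{-}$, $b=b^{+}-b^{-}$ and expanding by bilinearity, we may moreover assume $a\in A^{+}$ and $b\in B^{+}$.

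For the bounded case, suppose $0\le a\le m\,e_{A}$ and $0\le b\le m\,e_{B}$ for some $m\in\mathbb{N}$. Since $\otimes$ is a Riesz bimorphism, hence positive and monotone in each variable on the positive cones, we get
\[
0\le a\otimes b\le (m\,e_{A})\otimes(m\,e_{B})=m^{2}\,(e_{A}\otimes e_{B}),
\]
so $a\otimes b$ belongs to the order ideal generated by $e_{A}\otimes e_{B}$, a fortiori to $\mathcal{B}$. For arbitrary $a\in A^{+}$ and $b\in B^{+}$, recall (see Section~2, from \cite[Theorem~142.7]{Z1983}) that $a\wedge n\,e_{A}\to a$ and $b\wedge n\,e_{B}\to b$ relatively uniformly. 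A Riesz bimorphism is in particular a positive bilinear map, so Lemma~\ref{Cont} yields
\[
(a\wedge n\,e_{A})\otimes(b\wedge n\,e_{B})\;\longrightarrow\;a\otimes b
\]
relatively uniformly, and therefore in order. Each term on the left lies in $\mathcal{B}$ by the bounded case, and bands are order closed, so $a\otimes b\in\mathcal{B}$. This gives $\mathcal{B}=A\overline{\otimes}B$, and hence $e_{A}\otimes e_{B}$ is a weak order unit.

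The ingredients used are all standard: that relative uniform convergence implies order convergence in an Archimedean Riesz space, that a band is an order closed Riesz subspace, and the characterisation of weak order units via the generated band. Consequently I do not anticipate a genuine obstacle; the one point that has to be seen correctly is the reduction to \emph{elementary} tensors followed by the uniform approximation $x\wedge n\,e\to x$, which lets us apply Lemma~\ref{Cont} rather than having to control a general element of $A\overline{\otimes}B$ directly. (Alternatively, one may argue with the domination property that every $w\in A\overline{\otimes}B$ satisfies $|w|\le u\otimes v$ for some $u\in A^{+}$, $v\in B^{+}$, and squeeze $w$ between $0$ and $u\otimes v-\bigl((u\otimes v)\wedge n^{2}(e_{A}\otimes e_{B})\bigr)$, which tends to $0$ relatively uniformly.)
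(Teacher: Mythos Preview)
Your proof is correct, but it follows a genuinely different route from the paper's.

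The paper argues by contradiction using a structural result on the Riesz tensor product (Grobler--Labuschagne, \cite[Theorem~5.8(e)]{GL1988}): if $0<u\in A\overline{\otimes}B$ then there exist $x\in A_{+}$, $y\in B_{+}$ with $0<x\otimes y\le u$. Assuming $u\wedge(e_{A}\otimes e_{B})=0$ with $u\neq 0$, one then gets $(x\wedge e_{A})\otimes(y\wedge e_{B})\le u\wedge(e_{A}\otimes e_{B})=0$, forcing $x\wedge e_{A}=0$ or $y\wedge e_{B}=0$; since $e_{A},e_{B}$ are weak order units in $A,B$, this contradicts $x\otimes y>0$.

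Your argument instead shows directly that the band $\mathcal{B}$ generated by $e_{A}\otimes e_{B}$ exhausts $A\overline{\otimes}B$, by reducing to positive elementary tensors and using the uniform approximation $a\wedge ne_{A}\to a$, $b\wedge ne_{B}\to b$ together with Lemma~\ref{Cont}. This avoids the external Grobler--Labuschagne ``domination from below'' lemma entirely and stays within the tools already set up in the paper (relative uniform continuity of positive bilinear maps and the $f$-algebra fact $x\wedge ne\to x$). The trade-off is that you invoke slightly more Riesz-space machinery (order closedness of bands, r.u.\ convergence $\Rightarrow$ order convergence), whereas the paper's proof is a two-line contradiction once the cited structural result is in hand. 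Both approaches are clean; yours is the more self-contained of the two.
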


\begin{proof}
Let $0\leq u$ in $A\overline{\otimes}B$ such that $u\wedge e_{A}\otimes
e_{B}=0$. We have to show that $u=0$. We argue by contradiction and we suppose
that $u\neq0$. By \cite[Theorem 5.8. (e)]{GL1988}, there exist $x\in A_{+}$
and $y\in B_{+}$ such that%
\[
0<x\otimes y\leq u\text{.}%
\]
Therefore
\[
0\leq x\wedge e_{A}\otimes y\wedge e_{B}\leq u\wedge e_{A}\otimes e_{B}=0
\]

So $x\wedge e_{A}\otimes y\wedge e_{B}=0$ and then $x\wedge e_{A}=0$ or
$y\wedge e_{B}=0.$ Hence $x=0$ or $y=0$ and we obtain%
\[
x\otimes y=0
\]
which is a contradiction.
\end{proof}

It follows from Lemma \ref{L2} that the universal completion $(A\overline
{\otimes}B)^{u}$ of $A\overline{\otimes}B$ can be equipped with an $f$-algebra
multiplication denoted by $\star$ such that $e_{A}\otimes e_{B}$ is the unit
element. The next result shows that $\star$ extends the canonical
multiplication on $A\otimes B.$

\begin{proposition}
\label{alg}Let $A$ and $B$ be tow $f$-algebras with unit $e_{A}$ and $e_{B}$
respectively. Then $u\star v=u.v$ for all $u,v\in A\otimes B.$ So $A\otimes B
$ can be considered as a subalgebra of $\left(  A\overline{\otimes}B\right)
^{u}.$
\end{proposition}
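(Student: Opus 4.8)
The plan is to show that the two multiplications $\star$ and $\cdot$ agree on the algebraic tensor product $A\otimes B$ by first checking the agreement on elementary tensors $a\otimes b$ and then invoking bilinearity. Since both multiplications are bilinear on $A\otimes B$ (the canonical one by construction, and $\star$ because it is the $f$-algebra multiplication on $(A\overline{\otimes}B)^{u}$ restricted to the sublattice $A\overline{\otimes}B\supseteq A\otimes B$, hence at least distributive over the vector space operations), it suffices to prove
\[
(a\otimes b)\star(a'\otimes b')=aa'\otimes bb'\quad\text{for all }a,a'\in A,\ b,b'\in B.
\]

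The key device is Proposition~\ref{AZZ} (or rather Theorem~\ref{Kar}) together with the universal property of the Riesz tensor product. First I would observe that the canonical bimorphism $\otimes:A\times B\to A\overline{\otimes}B\hookrightarrow(A\overline{\otimes}B)^{u}$ is a lattice bimorphism sending $(e_{A},e_{B})$ to the unit $e_{A}\otimes e_{B}$ of $(A\overline{\otimes}B)^{u}$. Applying Theorem~\ref{Kar} with $A_{1}=A$, $A_{2}=B$, $B=(A\overline{\otimes}B)^{u}$ and $T=\otimes$, we get that $\otimes$ is a multiplicative bilinear map into the $f$-algebra $\bigl((A\overline{\otimes}B)^{u},\star\bigr)$, which is exactly the identity
\[
(a_{1}x_{1}\otimes a_{2}x_{2})=(a_{1}\otimes a_{2})\star(x_{1}\otimes x_{2}).
\]
Specializing $a_{1}=a,\ x_{1}=a',\ a_{2}=b,\ x_{2}=b'$ gives $(a\otimes b)\star(a'\otimes b')=aa'\otimes bb'$, which is the canonical product $(a\otimes b)\cdot(a'\otimes b')$.

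From the elementary-tensor case I would extend to all of $A\otimes B$ by bilinearity: writing $u=\sum_{i}a_{i}\otimes b_{i}$ and $v=\sum_{j}a_{j}'\otimes b_{j}'$, both $u\star v$ and $u\cdot v$ equal $\sum_{i,j}a_{i}a_{j}'\otimes b_{i}b_{j}'$, using that $\star$ distributes over finite sums in $(A\overline{\otimes}B)^{u}$. Hence $u\star v=u\cdot v$ for all $u,v\in A\otimes B$; in particular $A\otimes B$ is closed under $\star$, so it is a subalgebra of $\bigl((A\overline{\otimes}B)^{u},\star\bigr)$.

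The main obstacle I anticipate is a bookkeeping one rather than a deep one: making sure that Theorem~\ref{Kar} genuinely applies, i.e.\ that $\otimes:A\times B\to(A\overline{\otimes}B)^{u}$ really is a lattice bimorphism (not merely a Riesz bimorphism in the sense of $\varphi(\cdot,y),\varphi(x,\cdot)$ being lattice homomorphisms on the positive cones — one must check it is a genuine lattice bimorphism on all of $A\times B$, which follows from the defining property of the Fremlin tensor product as recalled in the $C(X)$ discussion) and that it sends the unit to the unit of the chosen $f$-algebra structure — this last point is precisely the content of Lemma~\ref{L2}, which identifies $e_{A}\otimes e_{B}$ as a weak order unit and thus as the identity of the $\star$-multiplication on $(A\overline{\otimes}B)^{u}$. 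Once these hypotheses are in place the computation is immediate.
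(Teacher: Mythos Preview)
Your proposal is correct and follows essentially the same route as the paper: both apply Theorem~\ref{Kar} to the canonical bimorphism $\sigma=\otimes:A\times B\to (A\overline{\otimes}B)^{u}$, using that $\sigma(e_{A},e_{B})=e_{A}\otimes e_{B}$ is the $\star$-unit, to deduce $(a\otimes b)\star(a'\otimes b')=aa'\otimes bb'$, and then extend by bilinearity. The only difference is cosmetic---the paper names the map $\sigma$ and omits your closing discussion of the hypotheses.
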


\begin{proof}
Let $\sigma$ be the canonical mapping defined by:%
\[%
\begin{array}
[c]{ccccc}%
\sigma & : & A\times B & \longrightarrow & (A\bar{\otimes}B)^{u}\\
&  & (x,y) & \longmapsto & x\otimes y
\end{array}
\]
Since $\sigma$ is a Riesz bimorphism and $\sigma(e_{A},e_{B})=e_{A}\otimes
e_{B}$ it follows from Theorem \ref{Kar} that $\sigma$ is multiplicative.
Consequently,
\[
\sigma((x,y).(x^{\prime},y^{\prime}))=\sigma(x,y)\star\sigma(x^{\prime
},y^{\prime})\text{.}%
\]
for all $x,x^{\prime}\in A$ and $y,y^{\prime}\in B$, which means that%
\[
xx^{\prime}\otimes yy^{\prime}=(x\otimes y)\star(x^{\prime}\otimes y^{\prime
})\text{.}%
\]
This yields to%
\[
(x\otimes y).(x^{\prime}\otimes y^{\prime})=(x\otimes y)\star(x^{\prime
}\otimes y^{\prime})\text{.}%
\]
The result follows immediately by bilinearity.
\end{proof}

The previous lemma enables us to make a connection between our tow approaches
mentioned in the beginning of this section and leads us to prove the main
theorem of this section.

\begin{theorem}
\label{Main2}Let $A$, $B$ be two $f$-algebras with unit elements $e_{A}$ and
$e_{B}$ respectively. Then the canonical multiplication on the algebraic
tensor product $A\otimes B$ can be extended to a multiplication on the Riesz
tensor product $A\overline{\otimes}B$, such that $A\overline{\otimes}B$ is an
$f$-algebra with unit. Moreover, $A\overline{\otimes}B$ satisfy the
multiplicative universal property (MUP):\newline For all multiplicative and
Riesz bimorphism $\psi:A\times B\longrightarrow C$ where $C$ is an
$f$-algebra, there exists a unique algebra and Riesz homomorphism
$\psi^{\otimes}:A\overline{\otimes}B\longrightarrow C$ such that
\[
\psi^{\otimes}(f\otimes g)=\psi(f,g)\text{ for all }f\in A\text{, }g\in
B\text{.}%
\]

\end{theorem}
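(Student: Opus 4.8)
The plan is to realize $A\overline{\otimes}B$ inside the $f$-algebra $(A\overline{\otimes}B)^{u}$, whose multiplication $\star$ (with unit $e_A\otimes e_B$, from Lemma~\ref{L2}) restricts to the canonical product on $A\otimes B$ by Proposition~\ref{alg}. First I would observe that $A\otimes B$ is a subalgebra of $(A\overline{\otimes}B)^{u}$, so by Theorem~\ref{M1}, applied to the semiprime $f$-algebra $(A\overline{\otimes}B)^{u}$ and its subalgebra $A\otimes B$, the Riesz subspace $\mathcal{R}(A\otimes B)$ generated by $A\otimes B$ is an $f$-subalgebra of $(A\overline{\otimes}B)^{u}$. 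Since $A\overline{\otimes}B$ is by construction exactly the Riesz subspace of $(A\overline{\otimes}B)^{u}$ generated by $A\otimes B$ — this is the content of Fremlin's characterization, and one checks it agrees with $\mathcal{R}(A\otimes B)$ — it follows that $A\overline{\otimes}B$ is closed under $\star$ and hence is an $f$-algebra with unit $e_A\otimes e_B$, whose multiplication extends the canonical one on $A\otimes B$. This establishes the first assertion. (A small point to be careful about: $(A\overline{\otimes}B)^{u}$ need not be semiprime a priori, but an $f$-algebra with a unit is automatically semiprime since $a^2=0$ forces $a=a\cdot e\wedge\ldots$; in any case one can pass to the appropriate quotient or simply invoke that universally complete $f$-algebras with unit are semiprime, so Theorem~\ref{M1} applies.)

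For the universal property, let $\psi:A\times B\to C$ be a multiplicative Riesz bimorphism into an $f$-algebra $C$. By the universal property of the Riesz tensor product $A\overline{\otimes}B$ (Fremlin), there is a unique Riesz homomorphism $\psi^{\otimes}:A\overline{\otimes}B\to C$ with $\psi^{\otimes}(f\otimes g)=\psi(f,g)$. Uniqueness as a \emph{Riesz} homomorphism already forces uniqueness as an algebra-and-Riesz homomorphism, so only multiplicativity of $\psi^{\otimes}$ remains. Here I would invoke Theorem~\ref{MUP}: with $B$ replaced by the subalgebra $A\otimes B$ of the semiprime $f$-algebra $A\overline{\otimes}B$, the Riesz homomorphism $\psi^{\otimes}$ restricted to $\mathcal{R}(A\otimes B)=A\overline{\otimes}B$ is multiplicative on all of $A\overline{\otimes}B$ as soon as it is multiplicative on $A\otimes B$. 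On $A\otimes B$ multiplicativity is immediate from bilinearity and the computation
\[
\psi^{\otimes}\big((f\otimes g)\cdot(f'\otimes g')\big)=\psi^{\otimes}(ff'\otimes gg')=\psi(ff',gg')=\psi(f,g)\psi(f',g')=\psi^{\otimes}(f\otimes g)\,\psi^{\otimes}(f'\otimes g'),
\]
where the middle equality uses that $\psi$ is multiplicative. Hence $\psi^{\otimes}$ is an algebra homomorphism, completing the proof.

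The main obstacle I anticipate is purely bookkeeping: making rigorous the identification of $A\overline{\otimes}B$ — as originally constructed by Fremlin as an abstract object with its universal property — with the concretely-described Riesz subspace $\mathcal{R}(A\otimes B)$ sitting inside $(A\overline{\otimes}B)^{u}$, and checking that the abstract $\otimes$ map corresponds to the concrete one $(x,y)\mapsto x\otimes y$. Once this identification is in place, both halves of the theorem are short consequences of Theorems~\ref{M1} and~\ref{MUP} together with Proposition~\ref{alg}; no further genuinely new argument is needed. A secondary, minor point is confirming that the hypotheses of Theorems~\ref{M1} and~\ref{MUP} (semiprimeness of the ambient $f$-algebra) are met, which follows from $A\overline{\otimes}B$ having the weak order unit $e_A\otimes e_B$ and being Archimedean, hence embeddable in its universal completion which is a semiprime $f$-algebra with unit.
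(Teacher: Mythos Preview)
Your proposal is correct and follows essentially the same route as the paper: embed $A\overline{\otimes}B=\mathcal{R}(A\otimes B)$ in $(A\overline{\otimes}B)^{u}$, use Proposition~\ref{alg} to see $A\otimes B$ as a subalgebra there, apply Theorem~\ref{M1} to get the $f$-algebra structure, and then invoke Fremlin's universal property together with Theorem~\ref{MUP} for the multiplicative universal property. The extra care you take about semiprimeness and the abstract-versus-concrete identification is reasonable but not elaborated in the paper, which simply asserts these points.
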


\begin{proof}
Observe first that $A\overline{\otimes}B$ is the Riesz subspace $R(A\otimes
B)$ generated by $A\otimes B$ in $(A\overline{\otimes}B)^{u}$. Keeping in mind
that $A\otimes B$ is a subalgebra $(A\overline{\otimes}B)^{u}$ (Proposition
\ref{alg}), it follows from Theorem \ref{M1} that $A\overline{\otimes}B$ is an
$f$-algebra with unit element $e_{A}\otimes e_{B}.$ It remains to prove that
$A\overline{\otimes}B$ satisfies the MUP. Let $C$ be an $f$-algebra and
$\psi:A\times B\longrightarrow C$ be a multiplicative Riesz bimorphism, then
there exists a lattice homomorphism $\psi^{\otimes}:A\overline{\otimes
}B\longrightarrow C$ such that $\psi^{\otimes}(x\otimes y)=\psi(x,y)$ for all
$x\in A$, $y\in B$. Since $\psi$ is multiplicative then the restriction of
$\psi^{\otimes}$ to $A\otimes B$ is multiplicative, it follows from Theorem
\ref{MUP} that $\psi^{\otimes}$ is multiplicative on the whole space.
\end{proof}

Our last result concerns the Riesz tensor product of semiprime $f$-algebras.
It is well known that any semiprime $f$-algebra $C$ can be seen as
$f$-subalgebra of $\mathrm{Orth}(C),$ where $\mathrm{Orth}(C)$ indicates the
unital $f$-algebra of all orthomorphisms on $C$.

\begin{theorem}
Let $A$ and $B$ be two semiprime $f$-algebras. Then the Riesz tensor product
$A\overline{\otimes}B$ can be endowed with an $f$-algebra product, which
extends the algebraic multiplication. In addition $A\overline{\otimes}B$
satisfies the following multiplicative universal property (MUP): For all
multiplicative and Riesz bimorphism $\psi:A\times B\longrightarrow C,$ where
$C$ is an $f$-algebra, there exists a unique algebra and Riesz homomorphism
$\psi^{\otimes}:A\overline{\otimes}B\longrightarrow C$ such that
\[
\psi^{\otimes}(f\otimes g)=\psi(f,g)\text{ for all }f\in A\text{, }g\in
B\text{.}%
\]

\end{theorem}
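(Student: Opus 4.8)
The plan is to reduce the semiprime case to the already-established unital case of Theorem~\ref{Main2} by passing to the orthomorphism algebras. Concretely, embed $A$ as an $f$-subalgebra of the unital $f$-algebra $A_1 := \mathrm{Orth}(A)$ and likewise $B$ into $B_1 := \mathrm{Orth}(B)$, with unit elements the identity orthomorphisms $e_A$ and $e_B$. Since the Riesz tensor product is functorial for Riesz subspace embeddings (the canonical map $A\overline{\otimes}B \to A_1\overline{\otimes}B_1$ induced by the inclusions is a Riesz isomorphism onto the Riesz subspace of $A_1\overline{\otimes}B_1$ generated by $A\otimes B$; this follows from the universal property of $\overline{\otimes}$ together with \cite[Theorem~5.8]{GL1988}), we may regard $A\overline{\otimes}B$ as a Riesz subspace of $A_1\overline{\otimes}B_1$. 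By Theorem~\ref{Main2}, $A_1\overline{\otimes}B_1$ carries an $f$-algebra multiplication with unit $e_A\otimes e_B$, extending the canonical multiplication on $A_1\otimes B_1$.

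The key step is then to observe that $A\overline{\otimes}B$ is a subalgebra of $A_1\overline{\otimes}B_1$ for this multiplication. Indeed, $A\otimes B$ is a subalgebra of $A_1\otimes B_1$ by construction of the canonical multiplication, and $A\overline{\otimes}B$ is by definition the Riesz subspace $\mathcal{R}(A\otimes B)$ generated by $A\otimes B$ inside the semiprime (in fact unital, hence semiprime) $f$-algebra $A_1\overline{\otimes}B_1$. Theorem~\ref{M1} applies verbatim: the Riesz subspace generated by a subalgebra of a semiprime $f$-algebra is an $f$-subalgebra. Hence $A\overline{\otimes}B$ is closed under the multiplication of $A_1\overline{\otimes}B_1$ and is itself an $f$-algebra, whose product restricts on $A\otimes B$ to the canonical algebraic multiplication, as required for the first assertion.

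For the multiplicative universal property, let $\psi : A\times B \longrightarrow C$ be a multiplicative Riesz bimorphism into an $f$-algebra $C$. Without loss of generality we may assume $C$ is semiprime (replace $C$ by $C/N$ where $N$ is the set of nilpotent elements, or embed into $\mathrm{Orth}(C)$; the multiplicativity and bimorphism properties are preserved, and uniqueness on $\mathcal{R}(A\otimes B)$ is unaffected). The universal property of the Riesz tensor product $A\overline{\otimes}B$ in the category of Riesz spaces yields a unique lattice homomorphism $\psi^{\otimes} : A\overline{\otimes}B \longrightarrow C$ with $\psi^{\otimes}(f\otimes g) = \psi(f,g)$ for all $f\in A$, $g\in B$. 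Because $\psi$ is multiplicative, the restriction of $\psi^{\otimes}$ to the subalgebra $A\otimes B$ is an algebra homomorphism: $\psi^{\otimes}((f\otimes g)(f'\otimes g')) = \psi^{\otimes}(ff'\otimes gg') = \psi(ff', gg') = \psi(f,g)\psi(f',g') = \psi^{\otimes}(f\otimes g)\psi^{\otimes}(f'\otimes g')$, and one extends to all of $A\otimes B$ by bilinearity. Now Theorem~\ref{MUP}, applied with $B$ replaced by the subalgebra $A\otimes B$ of the semiprime $f$-algebra $A\overline{\otimes}B$ and $\mathcal{R}(A\otimes B) = A\overline{\otimes}B$, shows that $\psi^{\otimes}$ is multiplicative on all of $A\overline{\otimes}B$. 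Uniqueness is automatic since any algebra and Riesz homomorphism agreeing with $\psi$ on the generators $f\otimes g$ must coincide with the lattice homomorphism $\psi^{\otimes}$ furnished by the Riesz universal property.

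I expect the main obstacle to be the first paragraph: verifying carefully that $A\overline{\otimes}B$ embeds as a Riesz subspace of $A_1\overline{\otimes}B_1$ in a way compatible with the $\otimes$ maps, so that the canonical multiplication on $A\otimes B$ is genuinely the restriction of the one on $A_1\otimes B_1$. This is where one must invoke the structure theory of the Fremlin tensor product (order density, the representation in \cite{GL1988, F1972}) rather than pure formalism; everything afterwards is a clean application of Theorems~\ref{M1} and~\ref{MUP}.
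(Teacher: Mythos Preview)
Your approach is essentially identical to the paper's: embed $A$ and $B$ into $\mathrm{Orth}(A)$ and $\mathrm{Orth}(B)$, apply Theorem~\ref{Main2} to the unital case, then invoke Theorems~\ref{M1} and~\ref{MUP}. For the embedding step you flag as the main obstacle, the paper simply cites \cite[Corollary~4.5]{F1972} to identify $A\overline{\otimes}B$ with the Riesz subspace generated by $A\otimes B$ in $\mathrm{Orth}(A)\overline{\otimes}\mathrm{Orth}(B)$; note also that your reduction of $C$ to the semiprime case is unnecessary, since Theorem~\ref{MUP} imposes no hypothesis on the target $f$-algebra.
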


\begin{proof}
It follows from \cite[Corollary 4.5]{F1972} that $A\overline{\otimes}B$ is the
Riesz subspace generated by $A\otimes B$ in $\mathrm{Orth}(A)\overline
{\otimes}\mathrm{Orth}(B)$. By Theorem \ref{Main2} $\mathrm{Orth}%
(A)\overline{\otimes}\mathrm{Orth}(B)$ is an $f$-algebra. According to Theorem
\ref{M1}, $A\overline{\otimes}B$ is an $f$-subalgebra of $\mathrm{Orth}%
(A)\overline{\otimes}\mathrm{Orth}(B)$. The proof of MUP is similar to the one
of Theorem \ref{Main2}.
\end{proof}

\end{document}